\def\qed{\hfill {\hbox{${\vcenter{\vbox{               
   \hrule height 0.4pt\hbox{\vrule width 0.4pt height 6pt
   \kern5pt\vrule width 0.4pt}\hrule height 0.4pt}}}$}}}
\def\utr{\, \underline{\triangleright}\, }
\def\otr{\, \overline{\triangleright}\, }
\newtheorem{theorem}{Theorem}
\newtheorem{corollary}[theorem]{Corollary}
\theoremstyle{definition}
\newtheorem{example}{Example}
\newtheorem{definition}{Definition}
\newtheorem{remark}{Remark}
\date{}
\title{\Large \textbf{Biquandle Coloring Invariants of Knotoids}}
\author{Neslihan G\"ug\"umc\"u\footnote{Email:nesli@central.ntua.gr }
 \and
Sam Nelson\footnote{Email: Sam.Nelson@cmc.edu. Partially supported by Simons Foundation collaboration grant 316709}}
\begin{document}
\maketitle

\begin{abstract}

In this paper, we consider biquandle colorings for knotoids in $\mathbb{R}^2$ 
or $S^2$ and we construct several coloring invariants for knotoids derived 
as enhancements of the biquandle counting invariant. We first enhance the 
biquandle counting invariant by using a matrix constructed by utilizing the 
orientation a knotoid diagram is endowed with. We generalize Niebrzydowski's 
biquandle longitude invariant for virtual long knots to obtain new 
invariants for knotoids. We show that biquandle invariants can detect mirror 
images of knotoids and show that our enhancements are proper in the sense 
that knotoids which are not distinguished by the counting invariant are 
distinguished by our enhancements.

\end{abstract}

\parbox{5.5in} {\textsc{Keywords:} Knotoids, biquandles, enhancements of 
counting invariants

\smallskip

\textsc{2010 MSC:} 57M27, 57M25}

\section{\large\textbf{Introduction}}\label{I}

\textit{Knotoids}, introduced by Turaev in \cite{Tu}, are knotted curves immersed in surfaces 
with two endpoints which are not allowed to move over or under another strand, 
generalizing the notion of $(1,1)$-tangle to allow endpoints in different 
regions of the planar complement of the knotoid. Knotoids and their invariants
are the subject of much recent study \cite{GK1,GK2,GL2,GL1,Gthesis}.

\textit{Biquandles} are algebraic structures with axioms encoding the 
oriented Reidemeister moves; they are used to define invariants
of oriented knots and links by counting colorings. Introduced in \cite{FRS},
they have also been the subject of much recent study \cite{EN}. In particular,
biquandle-colored knots and links form a category whose invariants can be used
to define invariants of knots and links known as 
\textit{enhancements} of the counting invariant \cite{EN,CJKLS,NOR,NO}.

\textit{Longitudes} in the knot group are elements of the knot group 
which are homotopic to the knot itself; these have been studied in various
contexts such as \cite{K}. In \cite{NL} a type of enhancement of the biquandle 
counting invariant of virtual long knots was defined using a biquandle version 
of longitudes, known as \textit{biquandle longitude invariants}.

In this paper we consider biquandle colorings of knotoids and introduce several enhancements of the biquandle counting invariant. First, we introduce a
matrix-valued enhancement of the counting invariant making use of the special
structure of knotoids. Next, we apply the biquandle longitude enhancement 
idea to the case of knotoids, obtaining several enhanced invariants in the 
process. The paper is organized as follows. In Section \ref{K} we recall the 
basics of knotoids. In Section  \ref{B} we review biquandles and the biquandle 
counting invariant, which can be defined for knotoids without any further 
modification, and we use the fact that
biquandle colorings of knotoids are fixed at the endpoints to enhance the
counting invariant, obtaining a matrix-valued invariant. In Section \ref{SB} 
we define several longitude enhancements for knotoids and give some examples. 
We show that biquandle invariants can detect mirror images of knotoids and 
show that our enhancements are proper in the sense that knotoids which are not
distinguished by the counting invariant are distinguished by our enhancements.
We conclude in Section \ref{Q} with some questions for future work.

\section{\large{\textbf{Knotoids}}}\label{K}

Knotoid diagrams are open ended oriented knot diagrams in oriented surfaces, 
forming new diagrammatic theories, including an extension of the classical 
knot theory when the supporting surface of the knotoid is considered to be 
the $2$-sphere $S^2$ \cite{Tu}. A \textit{knotoid diagram} $K$ in a surface 
$\Sigma$ is a generic immersion of the unit interval $[0,1]$ into $\Sigma$ 
with two distinct endpoints as images of $0$ and $1$, named as the 
\textit{tail} and \textit{head} of $K$, a finite number of self-crossing 
points that are transverse double points endowed with  extra passage 
information of over/under, and endowed with the orientation of $[0,1]$ from 
left to right (see Figure 1, for examples). A $(1,1)$-tangle or a 
long knot is defined to have its endpoints in a single region of the 
diagram. A knotoid diagram generalizes the notion of $(1,1)$- tangle or long 
knot by allowing its two endpoints to lie in different regions of the diagram. 
Below are some examples of knotoid diagrams considered in $S^2$ or $\mathbb{R}^2$:
\[\includegraphics{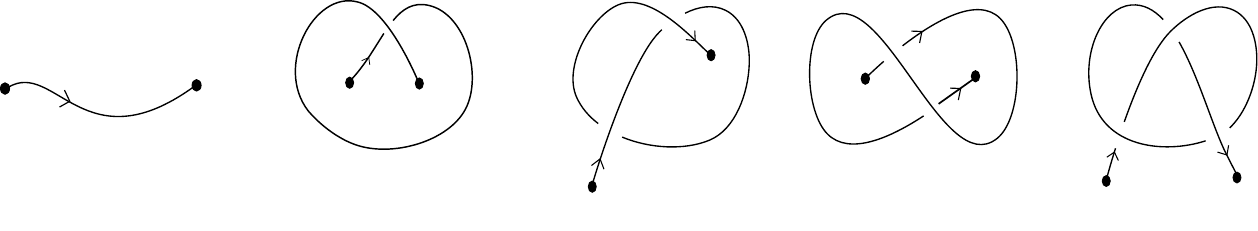}\] 

A \textit{knotoid} in a surface is defined to be an 
equivalence class given by the isotopy relation generated by three 
Reidemeister moves each taking place in local disks disjoint from the 
endpoints of the knotoid diagram, and isotopy of 
the supporting surface \cite{Tu}. The \textit{trivial knotoid} is a knotoid 
admitting a diagram without any crossings in its equivalence class. We forbid the following two moves 
\[\scalebox{0.4}{\includegraphics{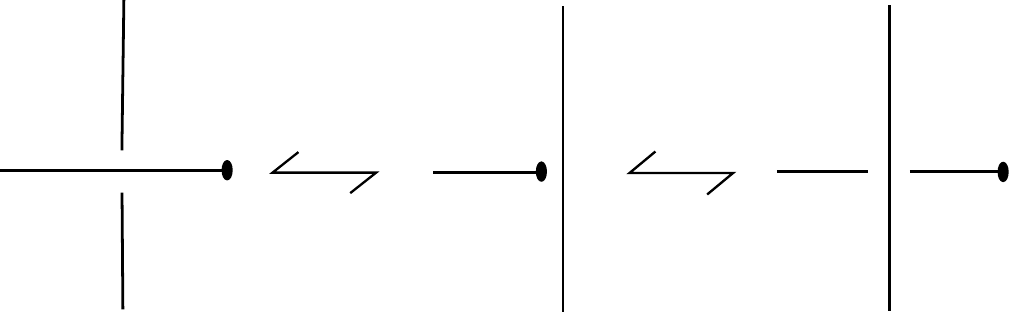}}\]
displacing an endpoint by sliding it over/under a 
transversal strand. The ban of these moves yields a non-trivial theory of knotoids. 

The theory of knotoids was introduced by Vladimir Turaev \cite{Tu} in 2012 and 
further studied by the first author and Louis Kauffman \cite{GK1} with the introduction of new invariants for knotoids. We also 
have a classification table of spherical knotoids up to 5 crossings given by 
Andrew Bartholomew \cite{Ba} given by the use of a generalization of the bracket polynomial for knotoids introduced by Turaev \cite{Tu}. Furthermore, geometric interpretations of 
knotoids (both planar and spherical) given in \cite{GK1, GK2} make it an eligible 
theory for understanding physical structures, for instance see \cite{GDFS, GGLDSK} for 
topological modeling of open linear protein chains via spherical and planar knotoids. Recently, a braided counterpart theory to the theory of planar knotoids, namely the theory of braidoids, has been introduced by the first author and Sofia Lambropoulou \cite{GL1}. In \cite{GL2, GL1}, they proved analogues of the Alexander theorem and the Markov theorem for knotoids/multi-knotoids and braidoids.


\section{\large\textbf{Biquandles and the Counting Invariant}}\label{B}

In this section, we review biquandles and the biquandle counting invariant. 
There is a variety for the choice of notation in defining a biquandle, we 
will find the notation in \cite{EN2} most suitable for the purposes of this
paper.

\begin{definition}
Let $X$ be a set. A \textit{biquandle structure} on $X$ is an assignment of
two bijections $\alpha_b,\beta_b:X\to X$ to each element $b\in X$ such that 
the following axioms are satisfied:
\begin{itemize}
\item[(i)] For all $a\in X$, $\alpha_a(a)=\beta_a(a)$,
\item[(ii)] For all $a,b\in X$, the map $S:X\times X\to X$ defined by
\[S(a,b)=(\alpha_a(b),\beta_b(a))\]
is invertible, and
\item[(iii)] For all $a,b\in X$ we have the \textit{exchange laws}:
\[\begin{array}{rcll}
\alpha_{\alpha_a(b)}\alpha_a & = &\alpha_{\beta_b(a)}\alpha_b & \mathrm{(iii.i)} \\
\beta_{\alpha_a(b)}\alpha_a & = &\alpha_{\beta_b(a)}\beta_b & \mathrm{(iii.ii)} \\
\beta_{\beta_a(b)}\beta_a & = &\beta_{\alpha_b(a)}\beta_b & \mathrm{(iii.iii)} \\
\end{array}\]
\end{itemize}
\end{definition}

\begin{example}
Examples of biquandles include 
\begin{itemize}
\item \textit{Constant Action Biquandles}: For any set $X$ and bijection
$\sigma:X\to X$, the assignment $\alpha_b=\beta_b=\sigma$ define a biquandle
structure in which the actions $\alpha_b,\beta_b$ do not depend on $b$.
\item \textit{Alexander Biquandles}: For any module over the 2-variable
Laurent polynomial ring $\mathbb{Z}[t^{\pm 1}, s^{\pm 1}]$, the maps
\[\alpha_b(a) = sa \quad\mathrm{and}\quad \beta_b(a)=ta+(s-t) b\]
define a biquandle structure.
\item \textit{Quandles}: A biquandle in which $\alpha_b$ is the identity map
for all $b$, is a \textit{quandle}. Examples include groups with 
$\beta_b(a)=b^{-n}ab^n$ for $n\in\mathbb{Z}$ (known as \textit{$n$-fold 
conjugation quandles}), groups with $\beta_b(a)=ab^{-1}a$ (known as \textit{core 
quandles}), and vector spaces with 
$\beta_b(a)=a+\langle a,b\rangle b$ for a symplectic form $\langle .,. \rangle$
(known as \textit{symplectic quandles}) as well as many more.
\end{itemize}
See \cite{EN} for more.
\end{example}

\begin{example}
We can represent biquandle structures on the finite set $X=\{1,2,\dots, n\}$
by listing the maps $\beta_1,\dots, \beta_n,\alpha_1,\dots, \alpha_n$
as column vectors in an $n\times 2n$ block matrix with the first through
$n$th columns being $\beta_1$ through $\beta_n$ and the $n+1$st though
$2n$th columns being $\alpha_1$ through $\alpha_n$, as any map $\sigma:X\to X$
can be represented by the column vector
\[\left[\begin{array}{c}
\sigma(1) \\\sigma(2) \\ \vdots \\\sigma(n)
\end{array}\right].\]
For instance, the Alexander biquandle structure on $\mathbb{Z}_4$ with
$t=1$ and $s=3$ has matrix
\[\left[\begin{array}{rrrr|rrrr}
3 & 1 & 3 & 1 & 3 & 3 & 3 & 3\\
4 & 2 & 4 & 2 & 2 & 2 & 2 & 2\\
1 & 3 & 1 & 3 & 1 & 1 & 1 & 1 \\
2 & 4 & 2 & 4 & 4 & 4 & 4 & 4
\end{array}\right].\]
Then for example in cycle notation we have $\beta_1=(13)(24)$ and 
$\alpha_1=(13)$ while $\beta_2=()$ where () stands for the identity permutation, and $\alpha_2=(13)$.
The notation is chosen so that the two block matrices are the operation tables
for the binary operations $\utr,\otr$ on $X$ defined by $x\utr y=\beta_y(x)$
and $x\otr y=\alpha_y(x)$.
\end{example}

\begin{definition}\label{def:1}
Let $K$ be a knotoid in $S^2$ or $\mathbb{R}^2$ represented by a diagram $D$ and $X$ a biquandle. A \textit{semiarc} of $D$ is defined to be a piece of strand of $D$ that connects either two adjacent crossings or an endpoint to a crossing.  
A \textit{biquandle coloring} of $D$ by $X$ is an assignment of an element
of $X$ to each semiarc in $D$ such that at every crossing the local coloring is as illustrated in the figure
below. Here we label (or color) the four semiarcs neighboring a positive and a negative crossing, respectively and all directed downwards. The left-hand side colors $a, b \in X$ are considered to be the inputs for the corresponding biquandle bijections on $X$, $\beta_b$ and $\alpha_a$, respectively. The upper right semiarc is colored with the output color $\alpha_a(b)$ and the lower right semiarc is colored $\beta_b(a)$, if the crossing is positive. If the crossing is negative then the upper right semiarc is colored $\beta_b(a)$ and the lower right semiarc is colored $\alpha_a(b)$. 
\[\includegraphics{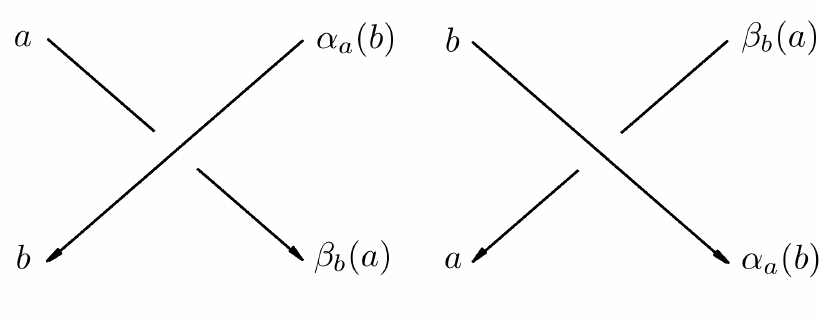}\]
\end{definition}

Similarly to the case of (classical/virtual) knots, the biquandle axioms are set so that for any diagram with a biquandle
coloring before a Reidemeister move, there is a unique coloring of the 
diagram after the move which agrees with the previous coloring outside
the neighborhood of the move. It follows that the number of such colorings
is an invariant of knotoids. We will denote the set of biquandle colorings
of a knotoid diagram $D$ by a biquandle $X$ as $\mathcal{C}(D,X)$. 

\begin{definition}
Let $X$ be a finite biquandle and $D$ a knotoid diagram representing a 
knotoid $K$. Then the \textit{biquandle counting invariant} of $K$ is
the cardinality $\Phi_X^{\mathbb{Z}}(K)=|\mathcal{C}(D,X)|$ of the set of
colorings of $D$ by $X$.
\end{definition}

\begin{example}\label{ex:kc}
We can compute $\Phi^{\mathbb{Z}}_X(K)$ from any diagram of $K$ by listing all
possible assignments of elements of $X$ to the semiarcs in $K$ and noticing
which ones satisfy the crossing relations pictured in Definition \ref{def:1}.
For example, the knotoid diagram below has five semiarcs and four crossing
equations; arbitrarily selecting $a$ and $b$ in $X$ determines $c=\alpha_a(b)$, $d=\beta_b(a)$ 
and $e=\beta_c(b)=\beta_{\alpha_a(b)}(b)$, so the coloring is valid provided
we have $d=\beta_b(a)=\alpha_b(c)=\alpha_b(\alpha_a(b))$. Then for instance if
$X= \{1,2,3,4,5\}$ is equipped with the biquandle structure determined with the listed matrix below, we can check the required equation
for each assignment of elements of $X$ to $a,b$.
\[\raisebox{-1.5in}{\includegraphics{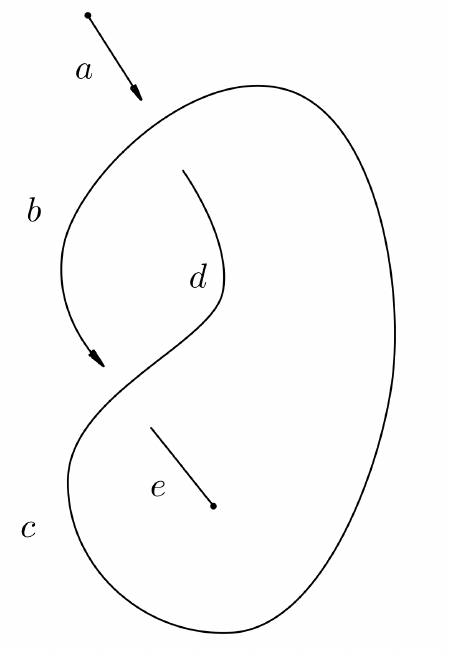}} \quad 
\begin{array}{c}
X=\left[\begin{array}{rrrrr|rrrrr}
3& 1& 3& 1& 1 & 3& 3& 3& 3& 3\\
5& 4& 5& 2& 5 & 5& 4& 5& 4& 4\\
1& 3& 1& 3& 3 & 1& 1& 1& 1& 1\\
2& 2& 2& 5& 4 & 2& 5& 2& 5& 5\\
4& 5& 4& 4& 2 & 4& 2& 4& 2& 2
\end{array}\right] \\
 \ \\
\begin{array}{rr|c||rr|c} 
a & b & \beta_b(a) = \alpha_b(\alpha_a(b))? &
a & b & \beta_b(a) = \alpha_b(\alpha_a(b))? \\\hline
1 & 1 &  -  & 3 & 4 & - \\
1 & 2 &  -  & 3 & 5 & - \\
1 & 3 & \checkmark   & 4 & 1 & - \\
1 & 4 &  -  & 4 & 2 & - \\
1 & 5 &  -  & 4 & 3 & - \\
2 & 1 &  -  & 4 & 4 & - \\
2 & 2 &  -  & 4 & 5 & \checkmark \\
2 & 3 &  -  & 5 & 1 & - \\
2 & 4 & \checkmark   & 5 & 2 & \checkmark \\
2 & 5 &  -  & 5 & 3 & - \\
3 & 1 & \checkmark   & 5 & 4 & - \\
3 & 2 &  -  & 5 & 5 & - \\
3 & 3 & - & & & \\
\end{array}
\end{array}\]
Hence in this example we see that $\Phi_X^{\mathbb{Z}}(K)=5$.
\end{example}

\begin{example}\label{ex:mr}
Let $X$ be the biquandle with matrix
\[\left[\begin{array}{rrr|rrr}
2 & 1 & 3 & 2 & 2 & 2\\
1 & 3 & 2 & 3 & 3 & 3\\
3 & 2 & 1 & 1 & 1 & 1
\end{array}\right].\]
As can be verified by the table below, the knotoid in Example \ref{ex:kc} has three colorings by $X$. We see that for coloring the
mirror image knotoid obtained by over-under switching
all of the crossings of $D$, the equation $d= \alpha_b(a)=\beta_b(c)=\beta_b(\beta_a(b))$ should be satisfied. It is verified in the same table that the mirror image knotoid has no colorings by $X$; hence
the biquandle counting invariant shows that this knotoid is not equivalent 
to its mirror image and that biquandle counting invariants can detect
this type of mirror image in knotoids.
\[\raisebox{-1.5in}{\includegraphics{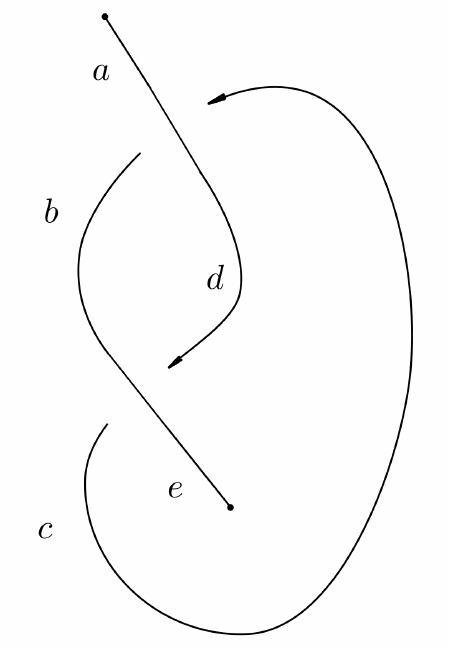}} \quad 
\begin{array}{c}
\begin{array}{rr|c|c} 
a & b & \alpha_b(a) = \beta_b(\beta_a(b))? &
 \beta_b(a) = \alpha_b(\alpha_a(b))? \\ \hline
1 & 1 &  2\ne 1  & 2\ne 3\\
1 & 2 &  2\ne 1  & 1= 1 \checkmark\\
1 & 3 &  2\ne 1  & 3\ne 2\\
2 & 1 &  3\ne 2  & 1\ne 3 \\
2 & 2 &  3\ne 2  & 3\ne 1 \\
2 & 3 &  3\ne 2  & 2=2 \ \checkmark\\
3 & 1 &  1\ne 3  & 3=3 \ \checkmark\\
3 & 2 &  1\ne 3  & 2\ne 1 \\
3 & 3 &  1\ne 3  & 1\ne 2 \\
\end{array}
\end{array}\]
\end{example}

\begin{example}\label{ex:al}
For Alexander biquandles of the form $X=\mathbb{Z}_n$ with $s,t$ coprime 
to $n$, we can find the set of colorings via linear algebra 
since the coloring equations 
\[\alpha_a(b)=sb\quad \mathrm{and}\quad \beta_b(a)=ta+(s-t)b\]
are linear. The knotoid $K$ in Example
\ref{ex:kc} has system of coloring equations
\[\begin{array}{rcl}
\beta_b(a) & = & d \\
\alpha_a(b) & = & c\\
\beta_c(b) & = & e \\
\alpha_b(c) & = & d
\end{array}\]
which for an Alexander biquandle becomes
\[\begin{array}{rcl}
\begin{array}{rcl}
ta+(s-t)b & = & d \\
sb & = & c\\
tb+(s-t)c & = & e \\
sc & = & d.
\end{array}
\end{array}
\]
Then for example to find colorings of $K$ by the Alexander biquandle 
$X=\mathbb{Z}_5$ with $t=2$ and $s=3$, we can find the kernel of the 
homogeneous system's coefficient matrix over $\mathbb{Z}_5$:
\[\left[\begin{array}{rrrrr}
2 & 1 & 0 & 4 & 0 \\
0 & 3 & 4 & 0 & 0 \\
0 & 2 & 1 & 0 & 4 \\
0 & 0 & 3 & 4 & 0
\end{array}\right]
\leftrightarrow
\left[\begin{array}{rrrrr}
1 & 0 & 0 & 4 & 0 \\
0 & 1 & 0 & 1 & 0 \\
0 & 0 & 1 & 3 & 0 \\
0 & 0 & 0 & 0 & 1 \\
\end{array}\right]
\] 
so in this case the kernel is one-dimensional 
spanned by $(1,4,2,1,0)$ and there are $|X|=5$
colorings.
\end{example}
\section{Enhancements of the Counting Invariant}

\subsection{Biquandle counting matrix}

Unlike the case of classical and virtual knots, a biquandle-colored knotoid 
has a well-defined initial color and terminal color assigned to the semiarcs 
adjacent to endpoints. We make use of this feature to enhance the 
counting invariant as follows.

\begin{definition}
Let $X=\{x_1,\dots, x_n\}$ be a finite biquandle and let $\mathcal{C}_{jk}(K)$
be the set of $X$-colorings of $K$ in which the initial semiarc is colored 
$x_j$ and the final semiarc is colored $x_k$. Then the \textit{biquandle 
counting matrix} of $K$ with respect to $X$ is the matrix $\Phi_X^{M_n}(K)$ 
whose entry in row $j$ column $k$ is $|\mathcal{C}_{jk}(K)|$.
\end{definition}

\begin{theorem}
For any finite biquandle $X$, $\Phi_X^{M_n}(K)$ is an invariant of knotoids.
\end{theorem}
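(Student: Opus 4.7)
The plan is to show that for any oriented Reidemeister move relating diagrams $D$ and $D'$ of the same knotoid $K$, and for any pair of indices $j,k \in \{1, \dots, n\}$, there is a bijection $\mathcal{C}_{jk}(D) \to \mathcal{C}_{jk}(D')$. Since $\Phi_X^{M_n}(K)$ is built entry-by-entry from the cardinalities $|\mathcal{C}_{jk}(D)|$, such a family of bijections (together with the obvious invariance under ambient isotopy of the supporting surface) will imply that every entry of the matrix is unchanged, establishing the theorem.

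The starting point is the fact already discussed in the excerpt: the biquandle axioms are designed so that any $X$-coloring of a diagram $D$ determines a unique matching $X$-coloring of a diagram $D'$ obtained from $D$ by a Reidemeister move, with the new coloring agreeing with the old on every semiarc lying outside the small disk in which the move is performed. This furnishes a bijection $\mathcal{C}(D,X) \to \mathcal{C}(D',X)$ whose defining property is that it fixes the color of every semiarc outside the move disk.

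The key observation specific to knotoids is that, by definition, each permitted Reidemeister move takes place in a local disk disjoint from the two endpoints of $K$; the forbidden moves displayed earlier are precisely what prevent the tail and head from ever entering such a disk. Consequently the initial semiarc and the terminal semiarc of $D$ always lie outside the move disk, and under the biquandle bijection their colors are preserved. Therefore the bijection $\mathcal{C}(D,X) \to \mathcal{C}(D',X)$ restricts, for every fixed pair $(j,k)$, to a bijection $\mathcal{C}_{jk}(D) \to \mathcal{C}_{jk}(D')$, so $|\mathcal{C}_{jk}(D)| = |\mathcal{C}_{jk}(D')|$.

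The only point requiring genuine care is the verification that the endpoint semiarcs truly lie outside every Reidemeister disk, which is exactly the content of the ban on the two forbidden endpoint-slide moves; everything else is a direct application of the standard coloring-invariance argument for biquandles. Isotopy of the supporting surface clearly has no effect on the combinatorial data of a coloring, so no further work is needed there, and the theorem follows.
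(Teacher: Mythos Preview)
Your proof is correct and follows essentially the same approach as the paper's own argument: both rest on the observation that Reidemeister moves for knotoids occur away from the endpoints, so the bijection on $X$-colorings induced by a move preserves the initial and terminal semiarc colors and hence restricts to a bijection on each $\mathcal{C}_{jk}$. Your write-up is simply a more detailed and carefully structured version of the paper's brief justification.
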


\begin{proof}
This follows immediately from the facts Reidemeister moves take place away 
from the endpoints so that the biquandle colorings of the initial and the 
terminal semiarcs and also that biquandle coloring numbers are
unchanged by Reidemeister moves.
\end{proof}

\begin{corollary}
For any knotoid $K$ and finite biquandle $X$, we have
\[\Phi_X^{\mathbb{Z}}(K)=\sum_{1\le j,k\le n} (\Phi_X^{M_n}(K))_{j,k}.\]
That is, the biquandle counting invariant is the sum of the entries of the 
biquandle counting matrix.
\end{corollary}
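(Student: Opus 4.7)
The plan is to observe that the corollary is fundamentally a statement about partitioning the set $\mathcal{C}(D,X)$ of all biquandle colorings of a diagram $D$ of $K$ according to the pair of colors assigned to its initial and terminal semiarcs, and then invoking the additivity of cardinality.

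First I would note that because a knotoid diagram $D$ has two distinguished endpoints (the tail and the head), every $X$-coloring $f \in \mathcal{C}(D,X)$ unambiguously determines an ordered pair $(x_j, x_k) \in X \times X$, namely the color of the initial semiarc (adjacent to the tail) and the color of the terminal semiarc (adjacent to the head). This yields a well-defined map $\Psi : \mathcal{C}(D,X) \to \{1,\dots,n\} \times \{1,\dots,n\}$ sending $f$ to the pair of indices of its endpoint colors, and the fibers of $\Psi$ are exactly the sets $\mathcal{C}_{jk}(K)$ from the definition of the counting matrix.

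Next I would write $\mathcal{C}(D,X) = \bigsqcup_{1 \le j,k \le n} \mathcal{C}_{jk}(K)$ as a disjoint union (the disjointness is immediate, since no coloring can simultaneously assign two different colors to the same semiarc). Taking cardinalities and using finite additivity then yields
\[ \Phi_X^{\mathbb{Z}}(K) = |\mathcal{C}(D,X)| = \sum_{1 \le j,k \le n} |\mathcal{C}_{jk}(K)| = \sum_{1 \le j,k \le n} (\Phi_X^{M_n}(K))_{j,k}, \]
which is the claimed identity.

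There is no real obstacle here: once the partition is identified the result is purely combinatorial. The only subtlety worth a sentence in the writeup is that the decomposition is well-defined on the knotoid $K$ (not just the diagram $D$), which follows from the preceding theorem showing that Reidemeister moves occur away from the endpoints and therefore preserve both the initial and terminal colors of any coloring.
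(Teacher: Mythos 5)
Your argument is correct: partitioning $\mathcal{C}(D,X)$ by the pair of endpoint colors and summing cardinalities is exactly the (essentially immediate) reasoning behind the corollary, which the paper leaves implicit since it follows directly from the definitions of $\Phi_X^{\mathbb{Z}}(K)$ and $\mathcal{C}_{jk}(K)$. Your closing remark about well-definedness on $K$ via the preceding theorem is a nice touch but not strictly needed, since the identity already holds diagram by diagram.
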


\begin{example}
We computed the biquandle coloring matrix for each of the knotoids in the 
table at \cite{Ba} with respect to the biquandle in Example \ref{ex:mr}. 
The results are listed in the table.
\[\begin{array}{r|l}
\Phi_X^{M_n}(K) & K \\ \hline 
& \\
\left[\begin{array}{rrr}
0 & 0 & 0 \\
0 & 0 & 0 \\
0 & 0 & 0 \\
\end{array}\right]
& 2.1, 4.4, 4.5, 5.1, 5.2, 5.3, 5.4, 5.10, 5.11, 5.12, 5.13, 5.14, 5.15, 5.16,
5.18, 5.21, 5.23 \\
& \\
\left[\begin{array}{rrr}
0 & 0 & 1 \\
1 & 0 & 0 \\
0 & 1 & 0 \\
\end{array}\right]
& 3.1, 4.3, 4.6, 4.7, 5.7, 5.27, 5.29 \\
& \\
\left[\begin{array}{rrr}
0 & 1 & 0 \\
0 & 0 & 1 \\
1 & 0 & 0 \\
\end{array}\right]
& 4.1, 4.2, 4.8, 5.8, 5.19, 5.20, 5.26, 5.28 \\
& \\
\left[\begin{array}{rrr}
1 & 0 & 0 \\
0 & 1 & 0 \\
0 & 0 & 1 \\
\end{array}\right]
& 4.9, 5.5, 5.6, 5.9, 5.22, 5.24, 5.25 \\
& \\
\left[\begin{array}{rrr}
3 & 0 & 0 \\
0 & 3 & 0 \\
0 & 0 & 3 \\
\end{array}\right] &
5.17 \\
& \\
\left[\begin{array}{rrr}
0 & 3 & 0 \\
0 & 0 & 3 \\
3 & 0 & 0 \\
\end{array}\right] &
5.30 \\
\end{array}\]
In this example there are three counting invariant values, namely 
$\Phi_X^{\mathbb{Z}}(K)=0,3$ and $9$, while we have six counting matrix values.
\end{example}

\subsection{\large\textbf{Longitude enhancements}}\label{SB}

In this section, we recall the biquandle longitude invariant and consider 
its application to the case of knotoids. We begin with a definition adapted
from \cite{NL}.

\begin{definition}
Let $X$ be a finite biquandle and $D_f$ a knotoid diagram with a coloring $f$
by $X$. Traveling along the knotoid in the direction of the orientation, we
encounter each crossing point twice. We will assign a bijection 
$\beta^{\pm 1}_{L}:X\to X$ to each of the two passes through each crossing, depending on the crossing sign, the type of the pass (over or under) and biquandle 
coloring as shown:
\[\scalebox{0.9}{\includegraphics{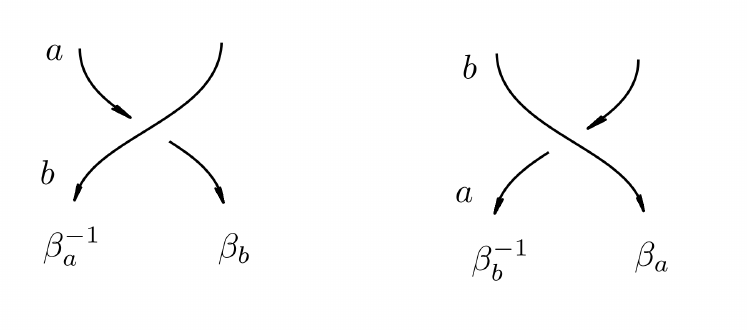}}\]
We can summarize the rules for defining this bijection as follows: When traveling through a crossing,
the assigned bijection is $\beta^{jk}_{L}$ where $j$ is the crossing sign, $k$ is $1$
if we are going under and $-1$ if we are going over, and $L$ is the biquandle 
color of the strand we see if we look to the right as we pass through the 
crossing.

Then the \textit{biquandle longitude weight} of the $X$-colored diagram
$D$, denoted by $BLW(D_f)$, is the composition of the crossing bijections
in the order in which they are encountered when traveling along the knotoid.
\end{definition}

\begin{remark}
Our convention is different from that of \cite{NL}, where the author chooses 
the color of the strand on the left. Our choice is motivated by our choice 
of biquandle notation, which also differs from the convention of \cite{NL}. 
\end{remark}

As in \cite{NL}, we have the following theorem.
\begin{theorem}
$BLW(D_f)$ is not changed by biquandle-colored Reidemeister moves.
\end{theorem}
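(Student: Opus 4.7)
The plan is to verify invariance of $BLW(D_f)$ under each of the three oriented Reidemeister moves by a local calculation. Since the moves take place in small disks disjoint from the endpoints of the knotoid, the portion of the knotoid (and hence the sequence of crossing bijections contributing to $BLW$) outside the disk is unchanged. Thus it suffices to show that for each strand entering and leaving the disk, the ordered composition of $\beta^{jk}_L$ bijections picked up while crossing through the disk is identical on both sides of the move. Once this is established strand by strand, the total composition defining $BLW(D_f)$ is unchanged, modulo the fact that the $X$-coloring after the move is the unique one matching the coloring outside the disk.

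For a Reidemeister I move, the local strand enters the disk, passes through the single kink crossing twice (once as the over-strand and once as the under-strand), and exits. In both passes the semiarc seen on the right is a semiarc of the same strand, colored by the same element $a\in X$. One pass contributes $\beta^{j,1}_a$ and the other contributes $\beta^{j,-1}_a$, and biquandle axiom (i), $\alpha_a(a)=\beta_a(a)$, forces these two maps to be mutually inverse. Thus their composition is the identity, matching the trivial contribution of the strand on the other side of the R1 move. For a Reidemeister II move, two strands pass through two crossings of opposite sign; each strand passes once as an over-strand and once as an under-strand, seeing the same partner-strand color on its right both times. The resulting composition $\beta^{+,\epsilon}_L\circ\beta^{-,\epsilon}_L$ (in the appropriate order) is the identity because $S(a,b)=(\alpha_a(b),\beta_b(a))$ is invertible by axiom (ii); this matches the strands after the move, which contribute nothing.

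The main obstacle, as usual, is Reidemeister III. Here three strands pass through three crossings inside the disk, so along each of the three strands we pick up a composition of two crossing bijections, and we must show that the pair of compositions along each of the three strands before the move equals the corresponding pair after. The crossing signs are unchanged by R3, and the over/under roles of each strand at its two crossings are also unchanged, so the sign/over-under superscripts on the $\beta$'s match on the two sides; what changes are the particular colors labeled as being ``on the right'' in each of the six passages, since the third strand is moved from one side of a crossing to the other. Writing out the coloring of the six local semiarcs on each side in terms of three input colors $a,b,c$, each of the three identities of compositions reduces to exactly one of the biquandle exchange laws (iii.i), (iii.ii), (iii.iii), in the same way that those axioms are derived from R3 in the first place. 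The careful bookkeeping of which crossing is traversed first, second, and third along each strand, and of which color is to the right at each passage under the stated convention, is the only delicate part; once the six right-hand colors are computed on both sides, invariance follows directly from axiom (iii).
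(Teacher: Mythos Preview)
Your strategy matches the paper's: verify strand-by-strand on a generating set of oriented Reidemeister moves, showing that the local composition of $\beta^{jk}_L$'s along each strand through the disk is unchanged. That is exactly what the paper does. However, several of your stated reasons for the local cancellations are misattributed.

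For R2 you write that ``each strand passes once as an over-strand and once as an under-strand.'' This is false: in any oriented R2 move one strand is the over-strand at \emph{both} crossings and the other is the under-strand at both. The cancellation along, say, the over-strand is therefore $\beta_L^{(+1)(-1)}\cdot\beta_{L'}^{(-1)(-1)}=\beta_L^{-1}\beta_{L'}$, and the point is that the ``look to the right'' convention forces $L=L'$; no appeal to axiom (ii) is needed. Likewise in R1: the two passes through the kink contribute $\beta_L^{j}$ and $\beta_L^{-j}$, which are mutually inverse \emph{by definition}, not by axiom (i). Axioms (i) and (ii) are what guarantee that the \emph{coloring} extends uniquely across the move (so that the same $L$ appears in both factors), but they are not the mechanism behind the cancellation of the longitude factors themselves.

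For R3 you claim the three strand identities reduce respectively to (iii.i), (iii.ii), (iii.iii). Since $BLW$ is a composition of maps $\beta_L^{\pm 1}$ only, the identities to be checked are equalities between words in $\beta$'s; exchange law (iii.iii), $\beta_{\beta_a(b)}\beta_a=\beta_{\alpha_b(a)}\beta_b$, is the only one of that shape, and the paper verifies that the identity along each of the three strands is an instance of (iii.iii) (or its inverse). Laws (iii.i) and (iii.ii) involve $\alpha$'s on the outside and are used to show the coloring is consistent across R3, not to compare the longitude words. With these corrections your argument is the paper's argument.
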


\begin{proof}
It is sufficient to check on a generating set of biquandle-colored 
oriented Reidemeister moves; one such set includes all four oriented 
moves of type I and of type II together with the Reidemeister III move
with all positive crossings. In the cases of moves of type I and II, our 
choice of coloring rules yields an identity map on each strand.
\[\includegraphics{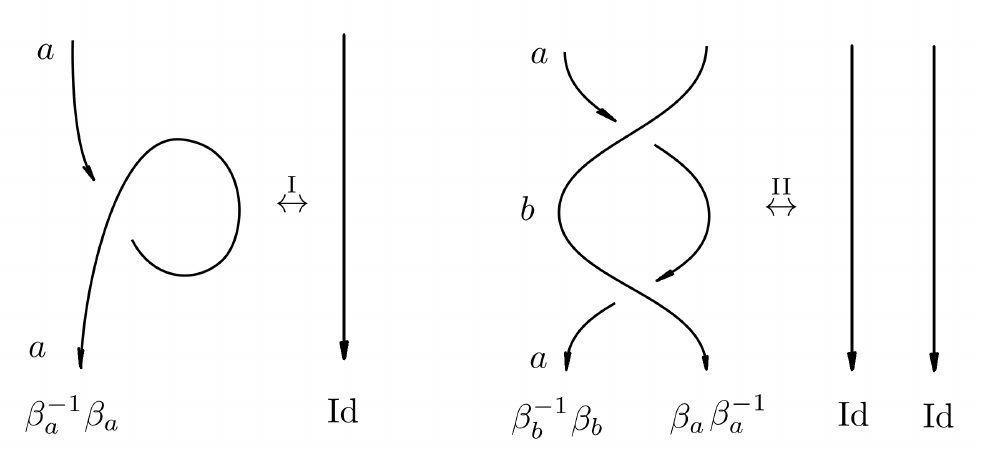}\]
We depict one case of type I and one of type II; the others are similar.
In the case of type III, the maps induced on each of the three strands
are equivalent by biquandle exchange law (iii.iii).
\[\includegraphics{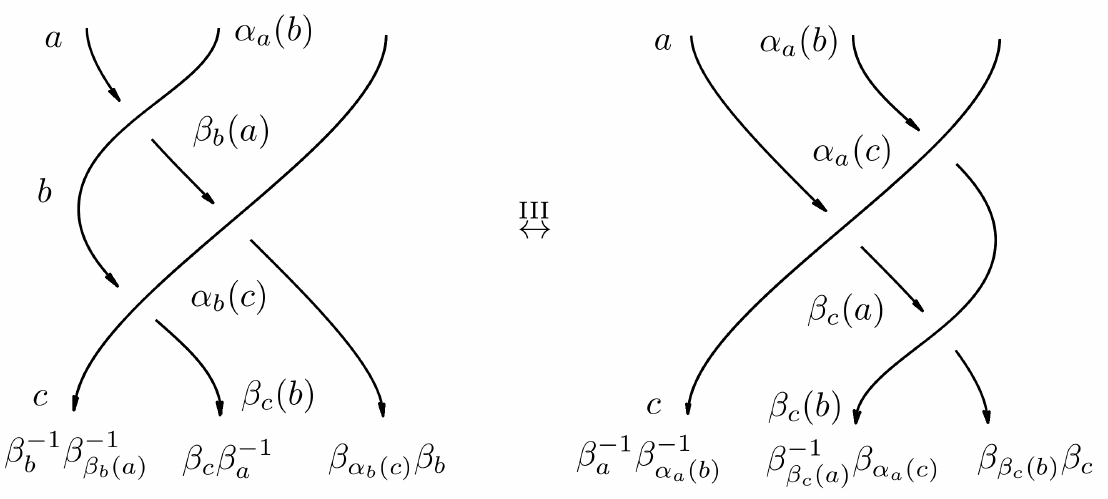}\]
\end{proof}

\begin{example}\label{ex:1}
Let us consider the $X$-coloring $D_f$ of the knotoid diagram
from Example \ref{ex:al} depicted below.
\[\includegraphics{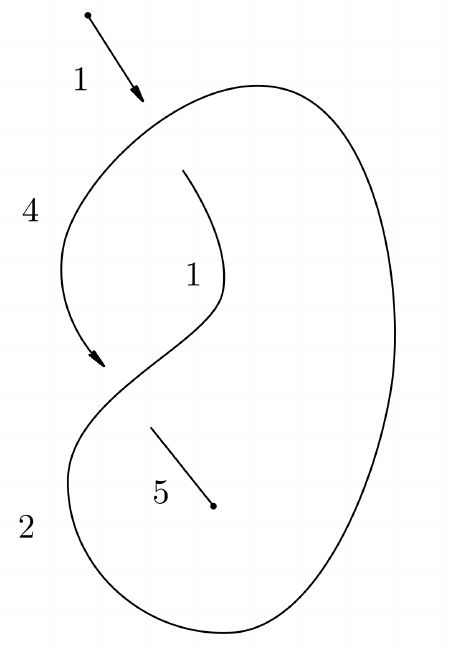}\]
The matrix of $X$ is given by
\[\left[\begin{array}{rrrrr|rrrrr}
3 &4 &5 &1 &2 & 3 &3 &3 &3 &3 \\
5 &1 &2 &3 &4 & 1 &1 &1 &1 &1 \\
2 &3 &4 &5 &1 & 4 &4 &4 &4 &4 \\
4 &5 &1 &2 &3 & 2 &2 &2 &2 &2 \\
1 &2 &3 &4 &5 & 5 &5 &5 &5 &5
\end{array}\right]\]
where our columns are numbered $1$ though $5$ in each block, with $5$ 
representing the class of $0\in\mathbb{Z}_5$ so our column labels can start
with 1. We then have $\beta_1=(1325)$, $\beta_2=(1452)$, $\beta_3=(1534)$,
$\beta_4=(2354)$ and $\beta_5=(1243)$. Let us find the biquandle longitude
weight for this coloring.

Traveling along the knotoid and looking to the right, we first go under
a negative crossing with a strand labeled 4, then over a negative
crossing with a strand labeled 4, then over a negative crossing with a
strand labeled 1, then finally under a negative crossing with a
strand labeled 2, so we have (composing right to left)
\begin{eqnarray*}
BLW(D_f) 
& = & \beta^{(1)(-1)}_2\beta^{(-1)(-1)}_1\beta_4^{(-1)(-1)}\beta^{(1)(-1)}_4\\
& = & \beta^{-1}_2\beta_1\beta_4\beta^{-1}_4\\
& = & \beta_2\beta_1^{-1}\\
& = & (1452)(1523)=(12345)
\end{eqnarray*}
and this coloring has biquandle longitude weight $(12345)$.
\end{example}

\begin{definition}
Let $X$ be a biquandle and $K$ a knotoid with diagram $D$. The 
\textit{biquandle longitude multiset} invariant of $K$ is the multiset 
of biquandle longitude weights over the set of biquandle colorings of $K$. 
That is,
\[\Phi_X^{M,L}(K)=\{BLW(D_f)\ | \ f\in\mathrm{Hom}(\mathcal{B}(K),X)\}.\]
\end{definition}

For ease of comparison, we can extract polynomial invariants of knotoids from
$\Phi_X^{M,L}(K)$ by replacing each weight with some integer-valued invariant
of bijections. For instance, we can define the 
\textit{biquandle longitude exponent polynomial}  $\Phi_X^{BLE}(K)$
of $K$ by replacing each
biquandle longitude weight $BLW(D_f)$ with its exponent $exp(BLW(D_f))$, 
i.e. the minimal positive integer $k$ such that 
$BLW(D_f)^k=\mathrm{Id}$, and converting the resulting multiset to 
polynomial form
by summing the power $u^{k}$ of a formal variable $u$ for each element $k$ in 
the multiset. This polynomial notation has the advantage that evaluation
at $u=1$ yields the biquandle counting invariant $\Phi_X^{\mathbb{Z}}(K)$.

\begin{example}
The knotoid $K$ in Example \ref{ex:1} has biquandle longitude multiset
\[\Phi_X^{M,L}(K)=\{(12345),(13524),(14253),(15432),()\}\]
which yields exponent polynomial
\[\Phi_X^{exp,L}(K)=u+4u^5.\]
\end{example}

\begin{example}
We computed the biquandle longitude exponent polynomial for the knotoids
in the table at \cite{Ba} with the biquandle $X$ given by the matrix
\[
\left[\begin{array}{rrrr|rrrr}
1& 2& 1& 4& 1& 1& 1& 1 \\ 
2& 4& 4& 1& 4& 4& 4& 4 \\
3& 3& 3& 3& 3& 3& 3& 3 \\
4& 1& 2& 2& 2& 2& 2& 2 \\
\end{array}\right]
\]
using our \texttt{python} code. The results are collected in the table.
\[
\begin{array}{r|l}
\Phi_X^{exp,L}(K) & K \\ \hline
2u+2u^3 & 2.1, 3.1, 4.1, 4.2, 4.3, 4.5, 4.9,5.3, 5.4, 5.7, 5.8, 5.10, 5.12 \\
& 5.14, 5.16, 5.19, 5.20, 5.22, 5.23, 5.26, 5.27, 5.28, 5.29 \\
4u & 4.4, 4.8, 5.5, 5.6, 5.9, 5.11, 5.13, 5.15, 5.18, 5.21, 5.30 \\
4u+6u^3 & 4.6, 5.1, 5.2, 5.24, 5.25 \\
10u & 4.7, 5.17.
\end{array}
\]
In particular, the two counting invariant values $\Phi_X^{\mathbb{Z}}(K)=4,10$
are refined into four exponent polynomial values, distinguishing more knotoids
than the unenhanced counting invariant.
\end{example}

The longitude weight of a biquandle-colored knotoid is a bijection
from the biquandle to itself, since it is composition of bijections 
$\beta_L^{\pm 1}$. In the case of biquandles for which we have explicit
formulas for $\beta_L^{\pm 1}$, we can obtain explicit formulas for the biquandle
longitudes as well. This can give us a stronger enhancement than the
exponent polynomial since different longitude weights may have the same 
exponent, while still being easier to compare invariant values visually.

\begin{example}
In Example \ref{ex:al}, the biquandle $X$ has Alexander
biquandle structure on $\mathbb{Z}_5$ with
\[\beta_b(x)=2x+b\quad\mathrm{and}\quad\beta_b^{-1}(x)=3x+2b.\] 
Then the longitude for the pictured coloring is the map
\begin{eqnarray*}
BLW(D_f)(x) 
& = & \beta_2^{-1}(\beta_1^{-1}(x))\\
& = & \beta_2^{-1}(3x+2(1))\\
& = & 2(3x+1)+2\\
& = & x+4.
\end{eqnarray*}
\end{example}

This observation inspires the following definition.

\begin{definition}
Let $X$ be an Alexander biquandle and $K$ a knotoid. Then the 
\textit{Alexander Longitude Enhancement}, denoted $\Phi_X^{M,AL}(K)$ 
is the multiset of biquandle
longitude weights written explicitly as functions on $X$.
\end{definition}

\begin{example}
Let $X=\mathbb{Z}_3=\{1,2,3\}$ with $t=1$ and $s=2$. The knotoid $2.1$ below 
has the pictured Alexander biquandle colorings by $X$ yielding the listed 
longitudes.
\[\includegraphics{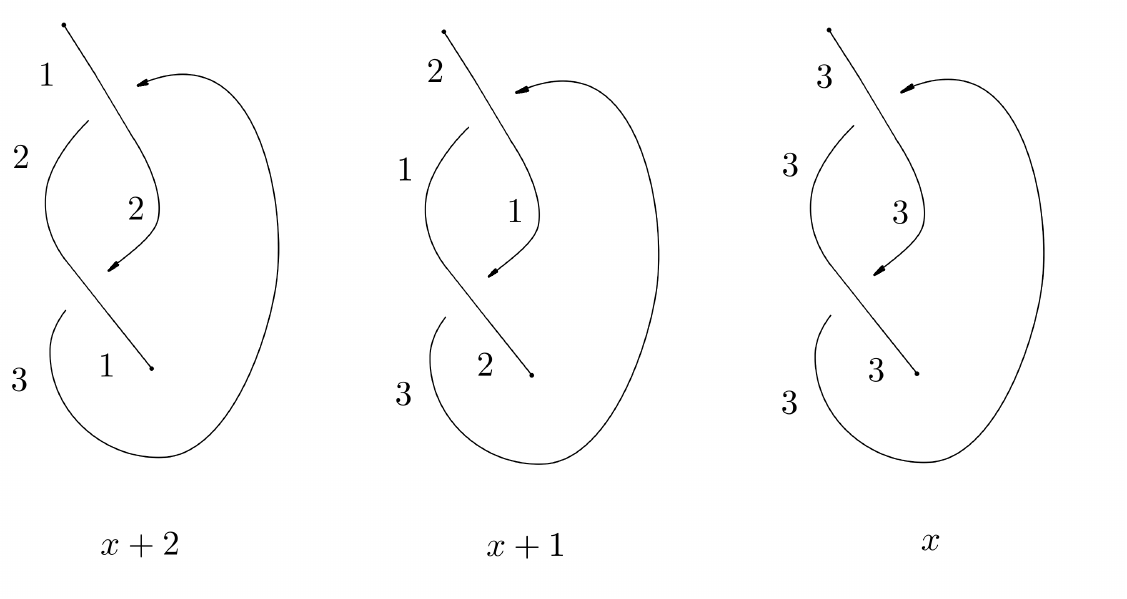}\]
Then comparing with the trivial knotoid, the counting invariant values are 
both $\Phi_X^{\mathbb{Z}}(K)=3$ while the Alexander longitude enhancement
values distinguish the knotoids, with
\[\Phi_X^{M,AL}(2.1)=\{x,x+1,x+2\}\ne\{x,x,x\}
=\Phi_X^{M,AL}(\mathrm{Trivial\ knotoid})
.\]
\end{example}

We note that one can also use the $\alpha^{-1}$ maps in place of the 
$\beta$ maps to obtain a generally distinct longitude weight, which we 
will denote by $BLW_{\alpha}(D_f)$ (and when doing so, denote 
$BLW(D_f)=BLW_{\beta}(D_f)$). 
If $X$ is a quandle, for instance, then the $\alpha_x$ 
maps are all the identity and all of the longitude information lies in the
$\beta_x$ maps. For non-quandle biquandles, the information is distributed
in the two maps. We can thus define a two-variable polynomial invariant of 
knotoids using the two weights:

\begin{definition}
Let $X$ be a biquandle and $K$ a knotoid. Then the 
\textit{biquandle longitude pair multiset} is
\[\Phi_X^{M,BLW^2}(K)
=\{(BLW_{\beta}(D_f),BLW_{\alpha}(D_f))\ |\ D_f\in\mathcal{C}(D,X)\}\]
and the \textit{biquandle longitude exponent pair polynomial} is
\[\Phi_X^{BLE^2}(K)
=\sum_{D_f\in\mathcal{C}(D,X)}u^{exp(BLW_{\beta}(D_f))}v^{exp(BLW_{\alpha}(D_f))}\] 
\end{definition}

\begin{example}
Let $X$ be the biquandle with matrix 
\[X=\left[\begin{array}{rrrr|rrrr}
2 & 2 & 2 & 2 & 2 & 3 & 1 & 4 \\
1 & 1 & 1 & 1 & 4 & 1 & 3 & 2 \\
4 & 4 & 4 & 4 & 3 & 2 & 4 & 1 \\
3 & 3 & 3 & 3 & 1 & 4 & 2 & 3
\end{array}\right].\]
Using our \texttt{python} code, we computed $\Phi_X^{BLE}(K)$ and
$\Phi_X^{BLE^2}(K)$ for the knotoids in the table at \cite{Ba}.
\[\begin{array}{r|l}
\Phi_X^{BLE}(K) & K \\ \hline
0 & 4.1, 4.2 4.6,4.7, 5.3, 5.4, 5.7, 5.8, 5.13, 5.14, 5.16, 5.20, 5.21\\
4u & 2.1, 3.1, 4.3, 4.4, 4.5, 4.8, 4.9, 5.5, 5.6, 5.9, 5.10, 5.11, 5.12, 5.15,
5.18, 5.19, 5.26, 5.27, 5.28, 5.29, 5.30 \\
16u & 5.1, 5.2, 5.17, 5.22, 5.23, 5.24, 5.25. \\
\end{array}
\]
While $\Phi_X^{BLE}(K)$  does not distinguish any knotoids
on the table which are not already distinguished by the counting invariant,
$\Phi_X^{BLE}(K)$ does:
\[\begin{array}{r|l}
\Phi_X^{BLE^2}(K) & K \\ \hline
0 & 4.1, 4.2 4.6,4.7, 5.3, 5.4, 5.7, 5.8, 5.13, 5.14, 5.16, 5.20, 5.21\\
4uv & 3.1, 4.3, 4.4, 4.5,  4.9, 5.5, 5.6, 5.9, 5.10, 5.11, 5.12, 5.15, 5.26, 5.27, 5.28, 5.29, 5.30 \\
4uv^2 & 2.1,  4.8,  5.18, 5.19 \\
16uv &  5.17, 5.24, 5.25 \\
16uv^2 & 5.1, 5.22 \\
12uv^2+4uv &  5.22, 5.23. \\
\end{array}
\]
\end{example}

Finally, we can use biquandle longitude weights to further enhance the
matrix version of the counting invariant, making use of both the longitude
information and the initial and terminal color information, analogously to the
invariants of long virtual knots described in \cite{NL}. 

\begin{definition}
Let $X$ be a biquandle and $K$ a knotoid with diagram $D$. The 
\textit{biquandle longitude exponent matrix} invariant of $K$ is the matrix 
$\Phi_X^{M_n,BWE}(K)$ whose entry in row $j$ column $k$ is 
$\sum_{D_f\in\mathcal{C}_{jk}(K)}u^{BWE(D_f)}$ and the \textit{biquandle longitude 
exponent pair matrix} invariant of $K$ is the matrix $\Phi_X^{M_n,BWE^2}(K)$
whose entry in row $j$ column $k$ is 
$\sum_{D_f\in\mathcal{C}_{jk}(K)}u^{BWE_{\beta}(D_f)}v^{BWE_{\alpha}(D_f)}$.
\end{definition}

\begin{example}
Let $X$ be the biquandle with matrix 
\[X=\left[\begin{array}{rrrr|rrrr}
3 & 1 & 2 & 4 & 3 & 3 & 3 & 3 \\
4 & 2 & 1 & 3 & 2 & 2 & 2 & 2 \\
1 & 3 & 4 & 2 & 4 & 4 & 4 & 4 \\
2 & 4 & 3 & 1 & 1 & 1 & 1 & 1 
\end{array}\right].\]
Using our \texttt{python} code, we computed $\Phi_X^{M_nBLE^2}(K)$ and
for the knotoids in the table at \cite{Ba}. The results are collected in the 
table.
\[\begin{array}{r|l}
\Phi_X^{M_n,BLE^2}(K) & K \\ \hline
& \\
\left[\begin{array}{cccc}
u^2v & 0 & 0 & 0 \\
0 & uv & 0 & 0 \\
0 & 0 & u^2v & 0 \\
0 & 0 & 0 & u^2v
\end{array}\right] & 2.1, 3.1, 4.8, 5.18, 5.26, 5.27 \\ 
& \\
\left[\begin{array}{cccc}
0 & 0 & 0 & 0 \\
0 & 3u^2v+uv & 0 & 0 \\
0 & 0 & 0 & 0 \\
0 & 0 & 0 & 0
\end{array}\right] & 4.1, 4.2, 4.6, 4.7, 5.3, 5.4, 5.7, 5.8, 5.13, 5.14, 5.16,
5.20, 5.21\\ 
& \\
\left[\begin{array}{cccc}
uv & 0 & 0 & 0 \\
0 & uv & 0 & 0 \\
0 & 0 & uv & 0 \\
0 & 0 & 0 & uv
\end{array}\right] & 4.3, 4.4, 4.5, 4.9, 5.5, 5.6, 5.9, 5.10, 5.11, 5.12, 5.15, 5.18, 5.19, 5.28, 5.29, 5.30 \\
\end{array}
\]
\[\begin{array}{r|l}
\Phi_X^{M_n,BLE^2}(K) & K \\ \hline
& \\
& \\
\left[\begin{array}{cccc}
4uv & 0 & 0 & 0 \\
0 & 4uv & 0 & 0 \\
0 & 0 & 4uv & 0 \\
0 & 0 & 0 & 4uv
\end{array}\right] & 5.17 \\
& \\
\left[\begin{array}{cccc}
4u^2v & 0 & 0 & 0 \\
0 & 4uv & 0 & 0 \\
0 & 0 & 4u^2v & 0 \\
0 & 0 & 0 & 4u^2v
\end{array}\right] & 5.1, 5.2, 5.17, 5.22, 5.23, 5.24, 5.25.
\end{array}
\]

Comparing the enhancements, we see that 
\begin{itemize}
\item The unenhanced counting invariant divides the knotoids into 
two classes (those with four colorings and those with sixteen),
\item The biqundle counting matrix refines the counting invariant into 
three classes with invariant values
\[
\left[\begin{array}{cccc}
1 & 0 & 0 & 0 \\
0 & 1 & 0 & 0 \\
0 & 0 & 1 & 0 \\
0 & 0 & 0 & 1
\end{array}\right],
\left[\begin{array}{cccc}
0 & 0 & 0 & 0 \\
0 & 4 & 0 & 0 \\
0 & 0 & 0 & 0 \\
0 & 0 & 0 & 0
\end{array}\right] \ \mathrm{and}\
\left[\begin{array}{cccc}
4 & 0 & 0 & 0 \\
0 & 4 & 0 & 0 \\
0 & 0 & 4 & 0 \\
0 & 0 & 0 & 4
\end{array}\right],
\]
\item The biquandle longitude exponent pair polynomial enhancement 
refines the counting invariant into four classes with values 
$uv+3u^2v,$ $4uv$, $16uv$ and $4uv+12u^2v$, and
\item The biquandle longitude exponent pair matrix enhancement
refines the counting invariant into five classes as showin in the table.
\end{itemize}

\end{example}


\section{\large\textbf{Questions}}\label{Q}

We end with some questions for future research. 

A biquandle longitude weight for an $X$-colored knotoid is a kind of 
noncommutative Boltzmann weight with values in the group of permutations
1	of elements of the coloring biquandle $X$. Does a non-commutative analogue
of biquandle homology lurk in the shadows here?

What other enhancements can be defined for the biquandle counting
invariant for knotoids? How do these invariants behave under the connected sum operation on knotoids (that is 
joining knotoids head-to-tail)?

\section*{Acknowledgements}
The first author thanks to Simons Foundation for supporting her to visit to the second author at the Claremont McKenna College, and to the Department of Mathematics at the Claremont McKenna College for the warm hospitality during her stay there.

\bibliography{ng-sn}{}
\bibliographystyle{abbrv}
%
%
%
%
%
%
%
%

\bigskip
\noindent
\textsc{Department of Mathematics \\
National Technical University of Athens \\
Zographou Campus, Iroon Polytechniou 9 \\
Athens 15780, Greece} \\

\noindent
\textsc{Department of Mathematical Sciences \\
Claremont McKenna College \\
850 Columbia Ave. \\
Claremont, CA 91711}

\end{document}